\documentclass[11pt]{article}

\usepackage[a4paper,margin=1.15in]{geometry}
\usepackage{amsmath,amssymb,amsthm,mathtools}
\usepackage[numbers,sort&compress]{natbib}
\usepackage[colorlinks=true,citecolor=blue,linkcolor=blue,urlcolor=blue]{hyperref}

\usepackage{graphics}
\usepackage{tikz}
\usetikzlibrary{patterns, patterns.meta}

\newtheorem{theorem}{Theorem}
\newtheorem{conjecture}[theorem]{Conjecture}

\newtheorem{claim}[theorem]{Claim}

\theoremstyle{definition}
\newtheorem{remark}[theorem]{Remark}
\newtheorem{construction}{Construction}

\usepackage[T1]{fontenc}
\DeclareMathOperator{\Maj}{Maj}
\newcommand{\Majp}{\Maj'}

\title{Strong Majority Edge-Coloring}
\author{
Sylwia Antoniuk\thanks{
Faculty of Mathematics and Computer Science, Adam Mickiewicz University, Pozna\'n, Poland.
E-mail: \texttt{antoniuk@amu.edu.pl}.
}
\and Magdalena Prorok\thanks{
AGH University of Krakow, al.~Mickiewicza 30, 30-059 Kraków, Poland. 
E-mail: \texttt{prorok@agh.edu.pl}.
}
\and Nika Salia\thanks{
Theoretical Computer Science Department, Faculty of Mathematics and Computer Science, 
Jagiellonian University, Kraków, Poland. 
E-mail: \texttt{nika.salia@uj.edu.pl}.
}
}   
\date{}

\begin{document}

\maketitle

\begin{abstract}
A strong majority edge-coloring of a graph is an edge-coloring in which, for
every edge \(e\) and every color \(i\), at most half of the edges adjacent to \(e\) have color \(i\). Such a coloring exists only for graphs with no pendant path of length two, which, following Kalinowski, Kamyczura, Pilśniak,
and Woźniak, we call admissible. They proved that every admissible graph admits such a coloring with at most eight colors and conjectured that four colors always suffice. We improve the upper bound from eight to five.
\end{abstract}

\section{Introduction}

Majority colorings form a natural relaxation of proper colorings. Instead of forbidding local coincidences altogether, one asks only that no color dominates a local neighborhood. In its classical vertex form, a coloring is a majority coloring if each vertex has at most half of its neighbors in its own color.  Variants of this idea have been studied for graphs, digraphs, and edge-colorings.

Majority colorings go back to Lovász's decomposition theorem, which implies that every finite graph has a majority vertex-coloring with two colors~\cite{lovasz1966decomposition}.
The infinite version is closely related to the unfriendly partition problem, for which several important positive results are known~\cite{aharoni1990unfriendly,bruhn2010rayless,berger2017unfriendly,kalinowski2025unfriendly}.
Directed majority colorings were introduced by Kreutzer, Oum, Seymour, van der Zypen,
and Wood, who proved the four-color theorem and conjectured that three colors suffice.
On the edge-coloring side, Bock, Kalinowski, Pardey, Pilśniak, Rautenbach, and Woźniak~\cite{bock2023majority}
proved that four colors suffice for the majority edge-colorings, while Hilton--de Werra-type
equitable coloring tools and vertex-splitting arguments provide useful technical
background~\cite{hilton1994sufficient,hilton1998vertex}.

Strong majority colorings were recently introduced by Kalinowski, Kamyczura, Pilśniak, 
and Woźniak~\cite{KKPW}. Here, the adjective ``strong'' refers to the fact that the restriction is imposed simultaneously for every color, not only for the color of the vertex or edge under consideration. A \emph{strong majority vertex-coloring} of a graph 
$G$ is a coloring $c:V(G)\to C$ in which for every vertex $v\in V(G)$ and every color $i\in C$, at most half of the neighbors of $v$ have color $i$. The least number of colors in such a coloring is denoted by \(\Maj(G)\). 
In the edge version, an edge-coloring $c:E(G)\to C$ is a \emph{strong majority edge-coloring} if, for every edge \(uv\in E(G)\) and every color \(i\in C\),
\[
        \bigl|\{f\in E(G): |\{u,v\}\cap f|=1,~c(f)=i\}\bigr|
        \le
        \frac{d(u)+d(v)}{2}-1.
\]
In other words, for every edge $e\in E(G)$ and every color $i\in C$, at most half of the edges adjacent to $e$ have color $i$. The least number of colors in such a coloring is denoted by \(\Majp(G)\).

A strong majority edge-coloring need not always exist. Indeed, there is a single trivial obstruction: if a graph $G$ contains a path \(xyz\) with \(d(x)=1\) and \(d(y)=2\), then the edge \(xy\) has exactly one adjacent edge, namely \(yz\), and no color can appear on at most the half of the edge neighborhood of \(xy\). We therefore restrict our attention to \emph{admissible graphs}, that is, graphs with no pendant path of length two.  
Conversely, every admissible graph admits a strong majority edge-coloring if sufficiently many colors are allowed, for instance, assigning distinct colors to all edges is an example of such a coloring.

Kalinowski, Kamyczura, Pilśniak, and Woźniak \cite{KKPW} proved that $\Maj(G)$ can be arbitrarily large, but no larger than $2\Delta(G)+1$ for graphs with no vertices of degree one.
On the other hand, they also proved that \(\Majp(G)\le 8\) for
every admissible graph \(G\), and that \(\Majp(G)\le 4\) whenever the minimum
degree satisfies \(\delta(G)\ge 7\). They also conjectured that \(4\) colors always
suffice.

\begin{conjecture}[\cite{KKPW}]
Every admissible graph \(G\) satisfies
\[
        \Majp(G)\le 4.
\]
\end{conjecture}

We improve the general bound from \(8\) to \(5\). We further prove that $\Majp(G)\le 4$
whenever \(G\) has no vertices of degree \(2\) or \(4\). In particular, this holds for every graph with \(\delta(G)\ge 5\), lowering the previously known minimum-degree threshold for \(\Majp(G)\le 4\) from \(7\) to \(5\).
\begin{theorem}\label{thm:five}
Every admissible graph \(G\) satisfies
\[
        \Majp(G)\le 5.
\]
\end{theorem}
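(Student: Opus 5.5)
The plan is to reduce the edge condition to a vertex-local condition and then satisfy that condition with five colors via equitable-type colorings.

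\textbf{Reduction to vertex conditions.} For an edge $uv$, the edges adjacent to $uv$ are the $d(u)-1$ other edges at $u$ together with the $d(v)-1$ other edges at $v$. Write $n_i(w)$ for the number of edges at $w$ with color $i$. The requirement for $uv$ and color $i$ is
\[
n_i(u)+n_i(v)-2[c(uv)=i]\le \tfrac{d(u)+d(v)}{2}-1 .
\]
This suggests a sufficient local condition: every vertex $w$ satisfies $n_i(w)\le \lceil d(w)/2\rceil$ for every $i$, with tighter requirements at vertices of small or odd degree. Concretely:
\begin{itemize}
\item If $d(w)\ge 3$, I want $n_i(w)\le d(w)/2$ in a form that leaves slack: a color that is overrepresented at $w$ should also be the color of the edge under consideration, since then the $-2$ term compensates.
\item At a vertex of degree $1$ or $2$, the neighbor must supply the slack, and admissibility guarantees that a leaf's neighbor has degree at least $3$.
\end{itemize}

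\textbf{Main construction.} I would use equitable edge-colorings in the Hilton--de Werra sense. By \cite{hilton1994sufficient}, every graph has a $k$-edge-coloring in which, at each vertex, the color classes differ in size by at most $2$. The stronger version, difference at most $1$ at every vertex whose degree is not divisible by $k$, holds when $k$ is odd or under suitable conditions.

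With $k=5$ colors, $n_i(w)\le \lceil d(w)/5\rceil+1$. For $d(w)\ge 5$ or so this is far below $d(w)/2$, so the sum $n_i(u)+n_i(v)$ stays under $(d(u)+d(v))/2-1$ whenever both degrees are moderately large. I would verify this inequality by a short degree computation. I would also use the vertex-splitting technique of \cite{hilton1998vertex}: split each vertex into pieces of degree at most $2$, or paths and cycles, to control the distribution directly.

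\textbf{Small-degree vertices.} These are the main obstacle. When $d(u)\in\{1,2,3,4\}$, the allowance $(d(u)+d(v))/2-1$ is tiny, so the colors at $u$ must essentially be distinct, and they must also avoid the dominant colors at $v$. I would proceed as follows:
\begin{itemize}
\item First require every vertex of degree at most $5$ to see pairwise distinct colors, i.e.\ the coloring is locally proper there. An equitable $5$-coloring already gives this, since $\lceil d/5\rceil=1$ and the Hilton--de Werra strong form gives difference at most one.
\item Then check the edges $uv$ with $d(u)$ small case by case on $(d(u),d(v))$.
\item The tight cases are $d(u)=1$ with $d(v)=3$, and $d(u)=2$ with $d(v)=2$. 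In a path $x$--$u$--$v$--$y$ with all degrees $2$, the edge $uv$ has adjacent edges $xu$ and $vy$ and allowance $1$, so these two edges need distinct colors. This forces a distance-two condition along degree-$2$ paths.
\end{itemize}
To handle this, I would suppress degree-$2$ vertices: contract maximal threads into single edges, color the reduced graph, and then color each thread with a cyclic sequence of three colors chosen to differ from the end colors. Five colors leave enough room to make these choices, which is exactly why five, rather than four, suffices here.

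Similarly, a leaf attached to a degree-$3$ vertex $v$ requires the other two edges at $v$ to get distinct colors. This is guaranteed if $v$ is locally proper.

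So the final proof would be: a strong equitable $5$-edge-coloring of the thread-suppressed graph, thread recoloring, and a finite degree-case verification. The thread interfaces are where I expect the difficulty to concentrate.
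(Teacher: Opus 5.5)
\textbf{There is a genuine gap in the reduction step.} Your overall strategy matches the paper's: an equitable $5$-edge-coloring via Hilton--de Werra, then repairs along degree-two paths. But the reduction you propose fails exactly where the paper's proof does its real work. There are two problems.

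\emph{First}, contracting every maximal thread to a single edge produces loops (from threads returning to their start vertex) and parallel edges (from two threads, or a thread and an edge, joining the same pair). Hilton--de Werra is a theorem about simple graphs, and after your contraction an equitable coloring can genuinely fail to exist. For example, replace each edge of $K_3$ by two internally disjoint paths of length two. Contraction yields the doubled triangle, whose six edges are pairwise adjacent, so no $5$-edge-coloring gives the four distinct colors that equitability forces at its degree-$4$ vertices.

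\emph{Second}, even when the contracted graph is simple, consider a thread $v\,e_0\,w_1\,e_1\,w_2\,e_2\,u$ whose terminal edges both inherit the color of the contracted edge $vu$. It cannot be repaired by recoloring interior edges, because the only edges adjacent to $e_1$ are $e_0$ and $e_2$. Separating $e_0$ and $e_2$ means changing the color counts at $v$ or $u$, and your plan does not control that.

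The paper's operations (R1)--(R4) are designed to avoid both issues. Loop ears are replaced by pendant leaves. A length-two ear is contracted to $uv$ only when $uv$ is not already an edge; otherwise it is kept and repaired directly. Every longer ear is replaced by a cherry $v\,w_1'\,u$ rather than by an edge. Then equitability at the degree-two vertex $w_1'$ already gives the two terminal edges distinct colors, and the interior can be filled greedily.

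Two smaller points. The ``strong form'' of Hilton--de Werra you cite is not available as stated: the theorem needs $5\nmid d(v)$ at every vertex, so you must attach a pendant leaf at vertices of degree divisible by $5$ (the paper's (R5)). Consequently a degree-$5$ vertex only gets $d_i(v)\le 2$, not pairwise distinct colors as you claim. This tightness is why the paper removes an ear at degree-$5$ ear ends in (R1)--(R2), so that such vertices drop to degree $4$. Also, your first local condition $n_i(w)\le\lceil d(w)/2\rceil$ is not sufficient; the condition that works at both endpoints is $n_i(w)\le (d(w)-1)/2$.
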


The main tool in the proof is the equitable edge-coloring theorem of Hilton and
de Werra~\cite{hilton1994sufficient}, stated below. Roughly speaking, an equitable coloring distributes the colors so evenly around each vertex that the strong majority condition is automatic at every edge joining two vertices of degree at least three; the only edges left to handle are those on paths consisting of degree two vertices, and these are adjusted by a local recoloring that does not disturb the balance already secured at the higher-degree ends.

\begin{theorem}[Hilton, de Werra~{\cite{hilton1994sufficient}}]\label{thm:HdW}
Let \(H\) be a simple graph and let \(k\ge 2\). If \(k\nmid d_H(v)\) for every
\(v\in V(H)\), then \(H\) has an equitable edge-coloring with \(k\) colors; that
is, an edge-coloring such that, for every vertex \(v\) and every two colors
\(i,j\in[k] := \{1,2,\dots,k\}\),
\[
        |d_i(v)-d_j(v)|\le 1,
\]
where \(d_i(v)\) denotes the number of edges of color \(i\) incident
with \(v\).
\end{theorem}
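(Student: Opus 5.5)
The plan is the classical recoloring argument. Start from an arbitrary edge-coloring \(c\) of \(H\) with colors in \([k]\). Measure its imbalance by the potential
\[
\Phi(c)=\sum_{v\in V(H)}\sum_{i=1}^{k} d_i(v)^2 .
\]
Since \(\sum_i d_i(v)=d_H(v)\) is fixed, \(\Phi\) is minimized exactly when the \(d_i(v)\) are as equal as possible at every vertex. So it suffices to show: if \(c\) is not equitable, some local recoloring strictly decreases \(\Phi\). Because \(\Phi\) takes nonnegative integer values, iterating this yields an equitable coloring.

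\emph{The basic recoloring step.} Suppose \(v\) and colors \(i,j\) satisfy \(d_i(v)\ge d_j(v)+2\). Let \(K\) be the component of the two-colored subgraph \(H_{ij}\) (edges of color \(i\) or \(j\)) that contains \(v\). We recolor \(K\) alternately along an Euler tour.
\begin{itemize}
\item If \(K\) has vertices of odd degree, add an auxiliary vertex \(w\) adjacent to all of them. Take an Euler circuit starting at \(w\) and color its edges alternately \(i,j\). Every real vertex is then passed through in the interior of the tour, so afterwards it has \(|d_i-d_j|\le 1\).
\item If all degrees in \(K\) are even, take an Euler circuit starting and ending at \(v\). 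Every vertex other than \(v\) becomes balanced in \(\{i,j\}\). Vertex \(v\) also becomes balanced unless the circuit has odd length, in which case \(|d_i(v)-d_j(v)|=2\).
\end{itemize}
At every vertex the sum \(d_i+d_j\) is unchanged, and the pair \(d_i,d_j\) moves closer together or stays as close as it was. Hence \(\Phi\) never increases. It strictly decreases at \(v\) except in the bad case where \(d_i(v)=d_j(v)+2\) and the recoloring just swaps the two values.

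\emph{The bad case, which is where \(k\nmid d_H(v)\) is used.} Suppose \(v\) is unbalanced and, for every pair \(i,j\) with \(d_i(v)-d_j(v)\ge 2\), the procedure is stuck. Then every such pair has \(d_i(v)-d_j(v)=2\) exactly, and \(d_i(v)+d_j(v)=2m\) is even. Since \(k\nmid d_H(v)\), the values \(d_l(v)\) cannot all equal \(m\).

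The aim is to find a third color \(l\) such that pairing \(l\) with \(i\) or with \(j\) gives an odd sum at \(v\), or a difference of at least \(3\). In either situation the Euler-tour recoloring strictly decreases the contribution of \(v\) to \(\Phi\). Concretely:
\begin{itemize}
\item If some \(d_l(v)\notin\{m-1,m,m+1\}\), it differs from \(d_i(v)=m+1\) or from \(d_j(v)=m-1\) by at least \(3\), which is impossible in the stuck situation.
\item If some \(d_l(v)=m\), then the pairs \((i,l)\) and \((l,j)\) have odd sums. Rebalancing one of them, say \((i,l)\) to \(\{m,m+1\}\), keeps \(\Phi\) constant at \(v\) but moves the imbalance. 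The following step then rebalances \((l,j)\) with odd sum and gains strictly.
\end{itemize}
To make this rigorous I would use a lexicographic potential: first \(\Phi\), then the number of vertices carrying a pair of colors at difference exactly \(2\).

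I expect this final case analysis to be the main obstacle. One must ensure the auxiliary moves at \(v\) do not create new stuck configurations at other vertices. The fact to lean on is that Euler-tour recoloring never worsens any vertex other than the start vertex.
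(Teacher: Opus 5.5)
The paper gives no proof of this theorem; it quotes it from Hilton and de Werra, so the comparison is with what a correct proof must contain. Your Euler-tour step and the monotonicity of $\Phi$ are correct, but they only establish the \emph{nearly} equitable statement $|d_i(v)-d_j(v)|\le 2$. They also give the full statement for bipartite $H$, where your stuck case cannot occur.

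\textbf{The gap is the step from $2$ to $1$, and it is not a bookkeeping issue.} Nothing in your argument uses that $H$ is simple, and for multigraphs the statement is false. Take the triangle with every edge doubled and $k=5$. All degrees are $4$ and $5\nmid 4$, yet an equitable $5$-coloring would have to be proper. That is impossible, because the six edges are pairwise adjacent. Every move you describe applies verbatim to this multigraph, so no refinement of the potential can close the argument. Relatedly, taking $k=\Delta(H)+1$ (for $H$ without isolated vertices) turns the statement into Vizing's theorem. Your stuck case, an odd closed trail through $v$ in $H_{ij}$, is exactly the obstruction that single two-color recolorings cannot remove.

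Concretely, the bad-case analysis fails in two places.
\begin{itemize}
\item The sentence ``the values $d_l(v)$ cannot all equal $m$'' is vacuous, since $d_i(v)=m+1$. What $k\nmid d_H(v)$ actually gives is a color $l\neq i,j$ with $d_l(v)\in\{m-1,m+1\}$, and such an $l$ produces no odd-sum pair. For $k=3$ and the profile $(m+1,m+1,m-1)$ at $v$, your plan offers no move at all.
\item In the case $d_l(v)=m$ that you do treat, recoloring the $(i,l)$-component need not change anything at $v$. If it does shift the excess to $l$, then $(d_l(v),d_j(v))=(m+1,m-1)$ is again an even-sum pair at difference $2$. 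This is the same stuck configuration with $i$ renamed $l$, so neither $\Phi$ nor your secondary count decreases. In the doubled triangle the excess at $v$ just cycles through the colors.
\end{itemize}

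What is missing is a Vizing-type argument. Uncolor one edge at $v$ and build a fan of \emph{distinct} neighbors of $v$; this is where simplicity must enter. Then use $k\nmid d(x)$ as the analogue of ``every vertex misses a color''. If all edges at $x$ except one are colored equitably at $x$, then $k\nmid d(x)$ leaves at least two colors the remaining edge may take at $x$, whereas $k\mid d(x)$ leaves only one. Fan shifting and two-color switches, with these admissible colors in place of missing colors, is the kind of argument required. It is substantially more delicate than the bipartite case your method already handles.
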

The proof of Theorem~\ref{thm:five}, given in Section~\ref{sec:five}, has three stages. We first prepare the graph to satisfy the assumptions of Theorem~\ref{thm:HdW} and make some additional reductions to be able to handle the edges incident with degree two vertices. We then apply Theorem~\ref{thm:HdW} to the reduced graph, to obtain an edge-coloring in which each color appears nearly equally often at every vertex; this already makes the strong majority condition hold at every edge whose two endpoints have degree at least three. The edges that remain to be handled are those incident with at least one vertex of degree two. Their colors are adjusted in the next stage by recoloring along the paths of degree-two vertices, simultaneously preserving the balance at their higher-degree ends. When the graph has no degree-two vertices, the latter stage is vacuous, the equitable coloring is already strong majority on its own, and a sharper count shows that four colors suffice.

\begin{theorem}\label{cor:delta5}
Every graph \(G\) with no vertices of degree \(2\) or \(4\) satisfies
\(\Majp(G)\le 4\).
\end{theorem}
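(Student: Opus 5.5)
The plan is to apply Theorem~\ref{thm:HdW} with $k=4$ to a slightly modified graph, and then to check the strong majority condition edge by edge through a per-vertex inequality. Since $G$ has no vertices of degree $2$, it has no pendant path of length two, so it is admissible and no degree-two recoloring stage is needed.

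\textbf{Step 1: preparing the graph.} The only obstruction to Theorem~\ref{thm:HdW} with $k=4$ is a vertex whose degree is divisible by $4$. By assumption no vertex has degree $4$, so such degrees are $d\ge 8$. For every vertex $v$ with $4\mid d_G(v)$ I attach a new pendant edge $vv'$, where $v'$ is a new leaf. Call the resulting simple graph $H$. Then $d_H(v)=d_G(v)+1\equiv 1 \pmod 4$, each new leaf has degree $1$, and all other degrees are unchanged. Hence $4\nmid d_H(w)$ for every $w\in V(H)$. Theorem~\ref{thm:HdW} gives an equitable $4$-edge-coloring of $H$, and I restrict it to $G$.

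\textbf{Step 2: bounding the colour multiplicities at each vertex.} Define $f(d)$ as a bound on $d_i(v)$ for every vertex $v$ of degree $d$ in $G$ and every colour $i$.
\begin{itemize}
\item If $4\nmid d$, the vertex is untouched and equitability gives $d_i(v)\le \lceil d/4\rceil$.
\item If $d=4m$, then in $H$ the counts at $v$ are $m+1,m,m,m$, and deleting the pendant edge leaves $d_i(v)\le m+1=d/4+1$.
\end{itemize}

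\textbf{Step 3: reducing the condition to a per-vertex inequality.} Fix an edge $uv$ of colour $c$ and a colour $i$. The number of edges adjacent to $uv$ with colour $i$ is
\[
d_i(u)+d_i(v)-2[i=c].
\]
So it suffices to show, for each endpoint $w$ with degree $d=d(w)$, that the contribution $d_i(w)-[i=c]$ is at most $(d-1)/2$. Summing these two inequalities gives exactly the required bound $\frac{d(u)+d(v)}{2}-1$.

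For $i=c$ the contribution is $d_i(w)-1$. For $i\ne c$, the edge $uv$ itself has colour $c$, so $d_i(w)\le d-1$ trivially. I would then check the inequality by degree:
\begin{itemize}
\item $d=1$: for $i\ne c$ the contribution is $0$, and for $i=c$ it is $1-1=0$. Both are at most $(d-1)/2=0$.
\item $d=3$: $\lceil 3/4\rceil=1\le 1$.
\item $d\ge 5$ with $4\nmid d$: $\lceil d/4\rceil\le (d+3)/4\le (d-1)/2$, which holds exactly when $d\ge 5$.
\item $d=4m\ge 8$: $d/4+1\le (d-1)/2$, which holds exactly when $d\ge 6$.
\end{itemize}
A component that is a single edge has no adjacent edges, so there is nothing to check.

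\textbf{Main obstacle.} The crux is the vertices of degree divisible by $4$. Theorem~\ref{thm:HdW} does not apply at them directly, and the pendant-edge trick costs one unit of balance. The check above shows the lost unit is affordable precisely when $d\ge 8$; at $d=4$ it would give $2>3/2$. This is why degree $4$, together with degree $2$, must be excluded in the statement.
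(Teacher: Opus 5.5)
Your proof is correct and follows the paper's argument: attach a leaf at every vertex of degree divisible by $4$, apply Theorem~\ref{thm:HdW} with $k=4$, and use $d_i(w)\le\lfloor d(w)/4\rfloor+1\le (d(w)-1)/2$ at each endpoint. Your per-endpoint bound $d_i(w)-[i=c]\le (d(w)-1)/2$ is more careful than the paper's proof: the paper's displayed inequality fails at leaves ($d=1$), so it tacitly relies on the exact adjacent-edge count $d_i(u)+d_i(v)-2[i=c]$ that you write down.
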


\begin{proof}
 Attach one auxiliary leaf at each vertex whose degree is divisible by four and apply Theorem~\ref{thm:HdW} with \(k=4\) to obtain an equitable coloring with colors in $[4]$. After deleting the auxiliary leaves, every vertex \(v\in V(G)\) satisfies
\[d_i(v)\le\left\lfloor \frac{d(v)}4\right\rfloor+1\le \frac{d(v)-1}2\] 
for each color \(i\in[4]\), \ \(d(v)\notin \{2,4\}\). Hence, for every edge \(uv\in E(G)\) and any color \(i\in[4]\), the number of \(i\)-colored edges adjacent to \(uv\) is at most \(d_i(u)+d_i(v)\le\frac{d(u)+d(v)}{2}-1\), so the coloring is strong majority.
\end{proof}

We conclude this section with a conjecture for Eulerian graphs.

\begin{conjecture}\label{conj:eulerian}
Every Eulerian graph \(G\) satisfies \(\Majp(G)\le 3\).
\end{conjecture}

The bound, if true, is best possible as \(\Majp(C_{2k+1})= 3\). Note also that traversing an Euler tour and coloring its edges by the periodic pattern \(1,2,3,1,2,3,\dots\) yields a strong majority coloring whenever the length of the tour is divisible by three. Thus conjecture holds whenever $3\mid |E(G)|$.

\section{Constructions requiring four colors}

We first record three simple constructions showing that three colors do not suffice in
general.

\begin{construction}[Subdivided snarks~\cite{KKPW}]
Let \(S\) be a \emph{snark}, that is, a 3-regular graph with chromatic index \(4\), and let \(S^\ast\) be obtained from \(S\) by subdividing every edge exactly once. In other words, for every edge \(uv\in E(S)\) we add to $S^*$ a new vertex $x$, and we replace the original edge $uv$ with two edges $ux$ and $xv$.

We claim that \(S^\ast\) has no strong majority edge-coloring with three colors. Indeed, suppose such a coloring $c:E(S^\ast)\to[3]$ exists. Let \(uxv\) be the path replacing an edge \(uv\in E(S)\). Since every edge of \(S^\ast\) is adjacent to exactly three edges, the three edges adjacent to any fixed edge must receive pairwise distinct colors. It follows first that the three edges incident with any original vertex of \(S\) have pairwise distinct colors. Hence, for the edge \(ux\), the two other edges incident with \(u\) already use the two colors different from \(c(ux)\). Since the three edges adjacent to \(ux\) must be pairwise distinct, we must have $c(xv)=c(ux)$. Thus, the two edges of every subdivided edge receive the same color. Contracting each subdivided path gives a~proper edge-coloring of \(S\) with three colors, a contradiction. Hence \(S^\ast\) requires at least four colors for any strong majority edge-coloring.
\end{construction}

\begin{construction}[A pendant star \(K_{1,4}\)]
Let \(G\) be any admissible graph and let \(v_0\in V(G)\). Add a~new vertex \(x\), join \(x\) to \(v_0\), and add three new leaves \(v_1,v_2,v_3\) adjacent to \(x\). Thus \(x\) is the center of a pendant star \(K_{1,4}\).

Note that the four edges incident with \(x\) must receive pairwise distinct colors in every
strong majority edge-coloring. Indeed, if two of them have the same color, say $c(xv_i)=c(xv_j)$, $i\neq j$, then the edge \(xv_k\), $k\notin\{i,j\}$, is adjacent to exactly three edges, two of which have the same color. Therefore, every graph containing such a pendant \(K_{1,4}\) requires at least four colors in the strong majority edge-coloring.
\end{construction}

\begin{construction}[Fig.~\ref{fig.1}]\label{cons:3}
Let \(H\) be the graph obtained from the path $v_1v_2v_3v_4v_5v_6$ by adding the
edge \(v_3v_5\). We call \(v_1v_2\) and \(v_5v_6\) the terminal edges of \(H\).
\begin{figure}
\begin{center}
\begin{tikzpicture}[scale=0.75,every node/.style={font=\small},
                    dot/.style={circle,fill,inner sep=1.5pt}]
\node[dot] (v1)  at ( 0.0,0) {};  \node[below] at (v1)  {$v_1$};
\node[dot] (v2)  at ( 1.4,0) {};  \node[below] at (v2)  {$v_2$};
\node[dot] (v3)  at ( 2.8,0) {};  \node[below] at (v3)  {$v_3$};
\node[dot] (v4)  at ( 4.2,0) {};  \node[below] at (v4)  {$v_4$};
\node[dot] (v5)  at ( 5.6,0) {};  \node[below] at (v5)  {$v_5\!=\!v_1'$};
\node[dot] (v6)  at ( 7.0,0) {};  \node[below] at (v6)  {$v_6\!=\!v_2'$};
\node[dot] (v3p) at ( 8.4,0) {};  \node[below] at (v3p) {$v_3'$};
\node[dot] (v4p) at ( 9.8,0) {};  \node[below] at (v4p) {$v_4'$};
\node[dot] (v5p) at (11.2,0) {};  \node[below] at (v5p) {$v_5'$};
\node[dot] (v6p) at (12.6,0) {};  \node[below] at (v6p) {$v_6'$};
\draw[thick] (v2)--(v3)--(v4)--(v5);
\draw[thick] (v6)--(v3p)--(v4p)--(v5p);
\draw[thick] (v3) to[bend left=50] (v5);
\draw[thick] (v3p) to[bend left=50] (v5p);
\draw[thick,dashed] (v1)--(v2);
\draw[thick,dashed] (v5)--(v6);
\draw[thick,dashed] (v5p)--(v6p);
\draw[thick] (v1) to[bend left=32] (v6p);
\end{tikzpicture}
\caption{Two copies of the graph \(H\) from Construction 3 glued at the shared terminal edge \(v_5v_6=v_1'v_2'\) and equipped with additional edge \(v_1v_6'\). In any strong majority \(3\)-edge-coloring, the three dashed terminal edges receive the
same color; as they are the only two edges adjacent to \(v_1v_6'\), the strong majority condition fails.}\label{fig.1}
\end{center}
\end{figure}

If there exists a strong majority edge-coloring of \(H\) with three colors, then the two terminal edges must have the same color.  Indeed, the edge \(v_2v_3\) is adjacent to $v_1v_2,~v_3v_4,~v_3v_5$, and these three edges must have pairwise distinct colors. Similarly, the edge \(v_4v_5\) is adjacent to $v_3v_4,~v_3v_5,~ v_5v_6$, which also must have pairwise distinct colors.  Hence, both \(v_1v_2\) and \(v_5v_6\) are forced to be colored with a unique color different from the colors of \(v_3v_4\) and \(v_3v_5\) (see~Fig.~\ref{fig.1}).

Now take two copies of \(H\), one on the vertex set $\{v_1,v_2,\dots,v_6\}$, the other on the vertex set $\{v_1',v_2',\dots,v_6'\}$, identify their terminal edges $v_5v_6$ with $v_1'v_2'$, and then add an edge joining the two remaining end vertices, that is the edge $v_1v_6'$. In any three-color strong majority edge-coloring, the last two terminal edges are forced to have the same color. But they are precisely the two edges adjacent to  $v_1v_6'$, whose edge-neighborhood has size two. This violates the strong majority condition.

By inserting more copies of the same gadget in a chain, this construction gives arbitrarily large admissible graphs of maximum degree three that require four colors for a strong majority edge-coloring.

\end{construction}

\section{Proof of Theorem~\ref{thm:five}}\label{sec:five}

Throughout this section, let $G$ be an admissible graph. 
Since cycles satisfy $\Majp(C_n)\le 3$, we may assume that $G$ is connected and is not a cycle. 

We first reduce $G$ to a graph with all vertex degrees not divisible by~$5$ via a sequence of almost degree-preserving operations. We also take care of the edges lying on paths consisting of degree two vertices.
We then apply the equitable edge-coloring Theorem~\ref{thm:HdW} of Hilton and de~Werra, for five colors. Finally, we pull the coloring back through the reductions, locally recoloring along the degree-two paths so that the strong majority condition holds everywhere.

For an edge-coloring $c:E(G)\to[k]$ of a graph $G$, a vertex $v$ and a color $i\in[k]$, let $d_i(v)$ denote the number of edges of color $i$ incident with $v$ in $G$. We say that the coloring $c$ is \emph{balanced} at $v$ if
\begin{equation}\label{eq:balanced}
        d_i(v)\;\le\;\frac{d_G(v)-1}{2}
        \qquad\text{for every color }i \in [k].
\end{equation}

\begin{remark}\label{rem:balanced-suffices}
If \eqref{eq:balanced} holds at both endpoints of an edge $e=uv$, then $e$ is strong majority colored: for every color $i$ the number of $i$-colored edges adjacent to $e$ is at most $d_i(u)+d_i(v)\;\le\;\frac{d_G(u)+d_G(v)}{2}-1.$
The same conclusion holds when one endpoint, say $u$, has degree $1$, since admissibility forces $d_G(v)\ge 3$ and the bound $d_i(v)\le \left(d_G(v)-1\right)/2\le \left(d_G(u)+d_G(v)\right)/2-1$ already suffices. Consequently, once the coloring is balanced at every vertex of degree at least three, the only edges that can fail the strong majority condition are those incident with a vertex of degree two.
\end{remark}

We record the elementary counting lemma that governs how much freedom we have when recoloring an edge at a vertex of degree at least three. It is stated for the five-color palette $[5]=\{1,2,3,4,5\}$.

\begin{claim}\label{claim:bad}
Let $d_G(v)=k\ge 3$ and suppose that \eqref{eq:balanced} holds at $v$. Call a color \emph{bad} for an edge $vw$ incident with $v$ if recoloring $vw$ with that color would violate \eqref{eq:balanced} at $v$. Then $vw$ has at most three bad colors if $k=4$, and at most two bad colors otherwise.
\end{claim}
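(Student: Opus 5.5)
Let $m=\lfloor (k-1)/2\rfloor$ be the largest value of $d_i(v)$ allowed by \eqref{eq:balanced}. The plan is a counting argument over the color multiplicities at $v$.

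First identify which colors are bad for $vw$. Let $a=c(vw)$. Recoloring $vw$ with color $j\neq a$ leaves every $d_i(v)$ unchanged except two: $d_a(v)$ decreases by one and $d_j(v)$ increases by one. Decreasing a count cannot violate \eqref{eq:balanced}, so $j$ is bad exactly when $d_j(v)+1>m$, that is, when $d_j(v)=m$. Recoloring with $a$ itself changes nothing and is never bad. So the bad colors are exactly the colors $j\neq a$ that are \emph{saturated} at $v$, meaning $d_j(v)=m$.

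Next count saturated colors. The counts satisfy $\sum_i d_i(v)=k$, and $d_a(v)\ge 1$ because $vw$ itself has color $a$. The number $s$ of saturated colors $j\ne a$ therefore satisfies $sm+1\le k$, i.e. $s\le (k-1)/m$. Now split by parity of $k$.
\begin{itemize}
\item If $k=2m+1$ is odd, then $s\le 2m/m=2$.
\item If $k=2m+2$ is even, then $s\le (2m+1)/m$. For $m\ge 2$, i.e. $k\ge 6$, this gives $s\le 2$.
\item If $k=4$, then $m=1$ and the bound is only $s\le 3$.
\end{itemize}
The case $k=4$ also fits the palette: with five colors, the three edges other than $vw$ can carry three distinct colors different from $a$, so three bad colors really can occur.

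Combining the cases gives at most three bad colors when $k=4$ and at most two otherwise. Only $k=3$ needs separate care, since there $m=1$; it is covered by the odd case ($s\le 2$). There is no real obstacle here. The only points to get right are that the color currently on $vw$ is never bad, and that its count of at least one is subtracted before dividing by $m$.
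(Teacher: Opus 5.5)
Your proposal is correct and follows essentially the same counting argument as the paper. Both proofs identify the bad colors as those already attaining multiplicity $\lfloor (k-1)/2\rfloor$ apart from $vw$, and bound their number by a pigeonhole count over the remaining $k-1$ edges; the paper splits into $k\in\{3,4\}$ versus $k\ge 5$ rather than by parity, which is immaterial.
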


\begin{proof}
A color is bad precisely when it already occurs $\lfloor (k-1)/2\rfloor$ times among the $k-1$ edges incident with $v$ and other than $vw$. For $k=3$ and $k=4$ we have $\lfloor (k-1)/2\rfloor=1$, and by \eqref{eq:balanced} each edge at $v$ carries a distinct color, so the bad colors are exactly the colors of the remaining $k-1$ edges; this gives two bad colors when $k=3$ and three when $k=4$. For $k\ge 5$ we have $\lfloor (k-1)/2\rfloor\ge 2$ and
$3\lfloor (k-1)/2\rfloor\geq 2\lfloor (k-1)/2\rfloor+2>  k-1$, so at most two colors can occur with the required multiplicity, hence at most two bad colors.
\end{proof}

A path $B=v\,e_0\,w_1\,e_1\cdots w_k\,e_k\,u$ in $G$ with $k\ge 1$ is an \emph{ear} if $d_G(w_1)=\dots=d_G(w_k)=2$ and $d_G(v),d_G(u)\ge 3$; its \emph{ends} are $v$ and $u$ (possibly with $v=u$, in which case $B$ is a cycle through $v$, called a \emph{loop ear}) and its \emph{length} is the number of its edges, that is, $k+1$. The
\emph{terminal edges} of $B$ are $e_0=vw_1$ and $e_k=w_ku$. Since $G$ is not a cycle and is connected, every vertex of degree two lies in the interior of a unique ear, and ears, together with the edges spanned by vertices of degree at least three, partition the set $E(G)$.

We transform $G$ into a graph $G_5$ by the following five operations, each applied exhaustively in the order listed below. Here, $G_j$ denotes the graph obtained after the $j$-th operation. Every operation preserves admissibility and leaves the degree
of every vertex of degree at least three that is not divisible by $5$, unchanged.

\medskip
\noindent\textbf{(R1) Loop ears.}
Let $B=v\,e_0\,w_1\,e_1\cdots w_k\,e_k\,v$ be a loop ear at vertex $v$. Delete all its vertices except $v$. If $d_G(v)\neq 5$, attach two new leaves at $v$, restoring the contribution of $2$ that $B$ made to $d_G(v)$, so that $d_G(v)$ is unchanged; if $d_G(v)=5$,
attach a single leaf to $v$, lowering $d_G(v)$ from $5$ to $4$. The operation preserves admissibility, and we record the correspondence between $B$ and its replacement for the reverse pass. Let $G_1$ be the resulting graph, and note that it has no loop ears.

\medskip
\noindent\textbf{(R2) Ears at a degree-five end.}
Let $B=v\,e_0\,w_1\,e_1\cdots w_k\,e_k\,u$ be an ear with $d_{G_1}(v)=5$ and $d_{G_1}(u)\ge 3$. Delete all non-terminal vertices of $B$, that is, $w_1,\dots,w_k$, and their incident edges, lowering $d_{G_1}(v)$ from $5$ to $4$ and $d_{G_1}(u)$ by one. Then, if $d_{G_1}(u)\neq 5$, attach an auxiliary leaf $w'$ at $u$, restoring $d_{G_1}(u)$ to its original value. The operation preserves admissibility and creates no new vertex of degree five. We record the correspondence between $B$ and its replacement for the reverse pass. Denote the resulting graph by $G_2$, and note that there are no ears with a degree-five end in it.

\medskip
\noindent\textbf{(R3) Length-two ears (cherries).}
Let $B=v\,e_0\,w_1\,e_1\,u$ be a cherry with $d_{G_2}(v),d_{G_2}(u)\ge 3$. If $vu\notin E(G_2)$, delete $w_1$ and add the edge $vu$; if $vu\in E(G_2)$, leave $B$ unchanged. The resulting graph is $G_3$. We record, for each contracted cherry, the pair of terminal edges it is to be expanded into.

\medskip
\noindent\textbf{(R4) Long ears.}
Let $B=v\,e_0\,w_1\,e_1\cdots w_k\,e_k\,u$ with $k\ge 2$ and $d_{G_3}(v),d_{G_3}(u)\ge 3$. Delete its interior and replace it by a single degree-two vertex joined to $v$ and to $u$, i.e.\ by a cherry $v\,w_1'\,u$. The resulting graph is $G_4$. We record the correspondence between $B$ and the cherry that replaced it.

\medskip
\noindent\textbf{(R5) Degrees divisible by five.}
For every vertex $x$ with $5\mid d_{G_4}(x)$, attach one new leaf at~$x$. The resulting graph is $G_5$.

\medskip
By construction, for every vertex $x\in V(G_5)$ we have $5\nmid d_{G_5}(x)$. Indeed, operations 
(R1)--(R2) turn degree five vertices into degree four, operations (R3)--(R4) do not change the degrees of vertices, and operation (R5) turns each degree divisible by five into a degree congruent to $1$ modulo five.

We may now apply Theorem~\ref{thm:HdW} with $k=5$ to the graph $G_5$, to get an equitable edge-coloring $c_5\colon E(G_5)\to[5]$. 
Deleting the leaves added in~(R5) yields an edge-coloring $c_4\colon E(G_4)\to[5]$.

\begin{claim}\label{claim:balanced-G4}
The coloring $c_4$ is balanced at every vertex of $G_4$ of degree at least three.
\end{claim}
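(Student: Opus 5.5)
Fix a vertex $x$ of $G_4$ with $k:=d_{G_4}(x)\ge 3$. The plan is to read off the per-color counts from the equitable coloring $c_5$ of $G_5$ and then compare them with the threshold $(k-1)/2$ in \eqref{eq:balanced}, splitting according to the residue of $k$ modulo $5$.

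First we relate the degree of $x$ in $G_5$ to $k$. If $5\nmid k$, then (R5) adds nothing at $x$, so $d_{G_5}(x)=k$ and the colors at $x$ under $c_4$ coincide with those under $c_5$. If $5\mid k$, then (R5) attaches one leaf, so $d_{G_5}(x)=k+1$; removing that leaf lowers exactly one color count by one and leaves the others unchanged.

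Next we use equitability. By Theorem~\ref{thm:HdW}, every color class at $x$ in $G_5$ has size $\lfloor d_{G_5}(x)/5\rfloor$ or $\lceil d_{G_5}(x)/5\rceil$. Hence, for each $i\in[5]$,
\[
d_i(x)\le \left\lceil \frac{k}{5}\right\rceil \quad\text{if } 5\nmid k,
\qquad
d_i(x)\le \frac{k}{5}+1 \quad\text{if } 5\mid k.
\]
In the second case we can do better: the $k+1$ edges at $x$ in $G_5$ split as one class of size $k/5+1$ and four classes of size $k/5$. The leaf lies in some class, and after deleting it every class has size at most $k/5+1$. This bound is all we need.

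It remains to check $\lceil k/5\rceil\le (k-1)/2$, or $k/5+1\le (k-1)/2$ when $5\mid k$, for all $k\ge 3$ that can actually occur in $G_4$. This is where the reductions matter. The value $k=5$ is the only one that fails, since $2>2$ is false. We therefore need that $G_4$ has no vertex of degree exactly $5$. To see this, note that (R1) and (R2) turn every degree-$5$ vertex into a degree-$4$ vertex and create no new ones: leaves are attached at $u$ only to restore an original degree other than $5$. Further, (R3) and (R4) preserve all degrees. For the remaining values:
\begin{itemize}
\item when $k=3$ or $k=4$, we get $\lceil k/5\rceil=1\le (k-1)/2$;
\item when $6\le k\le 10$ with $k\ne 10$, we get $\lceil k/5\rceil=2\le 5/2$;
\item when $k=10$, we get $3\le 9/2$;
\item in general, $\lceil k/5\rceil\le (k+4)/5\le (k-1)/2$ as soon as $k\ge 13/3$, and $k/5+1\le (k-1)/2$ once $k\ge 7$, which covers every multiple of $5$ other than $5$.
\end{itemize}

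The main point to get right is the degree bookkeeping. We must confirm that no vertex of degree $5$ survives into $G_4$, for instance through a leaf attached at $u$ in (R2), or through an (R2) deletion that lowers $u$ from $6$ to $5$ before any compensating leaf is added. The compensation restores that degree, and $u$ receives a leaf exactly when its degree is not $5$, so this is fine. We must also confirm that (R3) never creates parallel edges, which is guaranteed by the condition $vu\notin E(G_2)$. Once these points are verified, the inequalities above complete the proof.
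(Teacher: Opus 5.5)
Your overall strategy is the same as the paper's: bound each color count at $x$ by the equitable bound in $G_5$, then compare with $(k-1)/2$. The way you dispose of $k=5$, however, is wrong on two counts.

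\textbf{The case $k=5$ does not fail.} If $d_{G_4}(x)=5$, then $d_{G_5}(x)=6$, so every color class at $x$ has size at most $\lceil 6/5\rceil=2$. Condition \eqref{eq:balanced} only asks for $d_i(x)\le (5-1)/2=2$, which holds with equality. Your own phrase ``$2>2$ is false'' says exactly this. Relatedly, $k/5+1\le (k-1)/2$ is equivalent to $3k\ge 15$, that is, to $k\ge 5$, not $k\ge 7$.

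\textbf{The structural claim is false.} You instead rely on the claim that $G_4$ has no vertex of degree $5$. But (R1) and (R2) only change the degree of a degree-five vertex that is an end of a (loop) ear, and (R3)--(R4) preserve degrees. For instance, if $G=K_6$, none of (R1)--(R4) applies and $G_4=K_6$ is $5$-regular. This is exactly why (R5) attaches a leaf at every $x$ with $5\mid d_{G_4}(x)$. It is also why the paper's proof treats $d'=d+1$ whenever $5\mid d$, including $d=5$. Your ``bookkeeping'' paragraph, which you identify as the main point, therefore asserts something untrue, and as written the argument leaves the case $d_{G_4}(x)=5$, which does occur, unproved.

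The repair is immediate. Drop the claim about degree-$5$ vertices and check $k=5$ directly as above. Your case analysis then reduces to the paper's single inequality $\lceil d'/5\rceil\le (d-1)/2$ for all $d\ge 3$, and the proof is complete.
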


\begin{proof}
Fix $x\in V(G_4)$ with $d_{G_4}(x)=d\ge 3$, and let $d'=d_{G_5}(x)$, so $d'=d+1$ if $5\mid d$ and $d'=d$ otherwise; in either case $5\nmid d'$. Equitability of the coloring $c_5$ and the deletion of at most one incident leaf yields that for any $i\in[5]$, the number of edges in color $i$, incident with $x$ in $G_4$, is at most $\lceil d'/5\rceil$, and the assertion follows since $\lceil d'/5\rceil\le(d-1)/2$ holds for every $d\ge 3$.
\end{proof}

By Remark~\ref{rem:balanced-suffices}, every edge of $G_4$ whose endpoints both have degree at least three is already strong majority colored under $c_4$. We record the one feature of the reduction that drives the restoration of the degree-five vertices.

\begin{remark}\label{claim:free-color}
Let $v$ be a vertex of $G$ with $d_G(v)=5$ processed by (R1) or (R2). Exactly one of these operations is applied at $v$ and it removes at most two loop ears in the case of (R1) or a single ear incident with $v$ in the case of (R2). Consequently, $d_{G_4}(v)=4$ and the four edges at $v$ carry four distinct colors under $c_4$.
\end{remark}

It remains to pull $c_4$ back to $E(G)$ and to adjust the colors on the edges incident with vertices of degree two. The mechanism is the following local statement.

\begin{claim}\label{claim:extend}
Let $H$ be a graph consisting of
\begin{itemize}
    \item a path $B=v\,e_0\,w_1\,e_1\cdots w_k\,e_k\,u$, where $k\ge 1$, $v\neq u$, and $uv\in E(G)$ whenever $k=1$,
    \item at least two edges incident with $v$ and different from $e_0$ (possibly with $uv$),
    \item at least two edges incident with $u$ and different from $e_k$ (possibly with $uv$).
\end{itemize} 
Suppose $d_H(v)\neq 5$ and $d_H(u)\neq 5$, and we have a coloring $c: E(H)\to [5]$, which is balanced at both $v$ and $u$, and which satisfies $c(e_0)\neq c(e_k)$ in case $k\geq 2$. Then $e_0,\dots,e_k$ can be recolored so that every edge of $B$ is strong majority colored and the new coloring is still balanced at both $v$ and $u$.
\end{claim}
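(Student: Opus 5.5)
The plan is to turn the statement into a small list-coloring problem on the sequence of colors $x_j:=c(e_j)$, $0\le j\le k$. All other colors stay fixed. We solve it greedily, with a case analysis only for short ears.

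\textbf{Step 1: translate the conditions.} First list what strong majority means for each edge of $B$.
\begin{itemize}
\item An interior edge $e_j$ with $1\le j\le k-1$ has exactly two neighbours, $e_{j-1}$ and $e_{j+1}$. So we need $x_{j-1}\ne x_{j+1}$.
\item The edge $e_0$ has as neighbours the $d_H(v)-1$ other edges at $v$ together with $e_1$.
\end{itemize}
Suppose balance at $v$ is kept, so every color occurs at most $\lfloor (d_H(v)-1)/2\rfloor$ times at $v$. Then $e_0$ can only fail if $d_H(v)$ is odd and $x_1$ lies in the set $S_v$ of colors occurring exactly $(d_H(v)-1)/2$ times among the edges at $v$ other than $e_0$. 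The same holds symmetrically for $e_k$, with $S_u$ and $x_{k-1}$.

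Balance at $v$ and $u$ is preserved exactly when $x_0$ avoids the bad colors of Claim~\ref{claim:bad} at $v$ and $x_k$ avoids those at $u$. The other edges at $v,u$ are unaffected as far as balance is concerned.

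Counting as in Claim~\ref{claim:bad}, we get $|S_v|\le 2$ and $S_v=\emptyset$ for even degree. The bad set $\mathrm{Bad}_v$ has size at most $2$, or at most $3$ when $d_H(v)=4$; here we use $d_H(v)\ne5$. For $d_H(v)=3$ we have $S_v=\mathrm{Bad}_v$. The constraints are therefore
\[
x_0\notin \mathrm{Bad}_v,\quad x_k\notin\mathrm{Bad}_u,\quad x_1\notin S_v,\quad x_{k-1}\notin S_u,\quad x_{j-1}\ne x_{j+1}\ (1\le j\le k-1).
\]

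\textbf{Step 2: long ears, $k\ge 3$.} The constraints split into two chains: the even-indexed colors and the odd-indexed colors, linked only through the end conditions.
\begin{itemize}
\item Keep $x_0=c(e_0)$ and $x_k=c(e_k)$, which are already non-bad.
\item Choose $x_1\notin S_v$ and $x_{k-1}\notin S_u$, each with at least $3$ choices.
\item Fill the remaining interior colors greedily along each parity chain. Each new color must differ from one already fixed color, and at the meeting point of a chain from two predecessors. That leaves at least $3$ of the $5$ colors free.
\end{itemize}
The chains only close up at the ends, where the forbidden sets have size at most $2$ plus one neighbour. So $5$ colors leave room throughout.

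\textbf{Step 3: $k=2$.} Here the conditions read $x_0\ne x_2$, $x_1\notin S_v\cup S_u$, $x_0\notin\mathrm{Bad}_v$ and $x_2\notin\mathrm{Bad}_u$. Keep $x_0,x_2$ as given; the hypothesis $c(e_0)\ne c(e_k)$ supplies exactly $x_0\ne x_2$. For $x_1$ we need only avoid $S_v\cup S_u$, which has at most $4$ colors, so a choice exists.

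\textbf{Step 4: $k=1$, the main obstacle.} Here $uv\in E(H)$, and the conditions read $x_0\notin \mathrm{Bad}_v\cup S_u$ and $x_1\notin\mathrm{Bad}_u\cup S_v$. These two conditions are independent of each other, but each union can a priori reach five colors, for instance $|\mathrm{Bad}_v|=3$ when $d_H(v)=4$ together with $|S_u|=2$.

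My first attempt is to exploit the shared edge $uv$.
\begin{itemize}
\item When $d_H(v)\in\{3,4\}$, $\mathrm{Bad}_v$ contains $c(uv)$.
\item When $d_H(u)=3$, $S_u$ consists of $c(uv)$ together with the color of the third edge at $u$.
\end{itemize}
So in the small-degree cases the union has size at most $4$. The genuinely dangerous case is $d_H(v)=4$ and $d_H(u)\ge7$ odd, with the other $d_H(u)-1$ edges at $u$ split evenly between two colors, neither equal to $c(uv)$.

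In that case I would argue that the equitable structure of $c$ rules this configuration out. If it does not, I would relax the plan and also recolor $uv$: it is adjacent to both ends, and changing it alters $\mathrm{Bad}_v$ and $S_u$. I would then check, via Remark~\ref{rem:balanced-suffices}, that $uv$ and its neighbours remain fine.

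This last case analysis is the step I expect to require the most care.
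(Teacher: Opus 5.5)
Your Steps 1--3 are correct and amount to the paper's argument for $k\ge 2$: for $k=2$ you keep $x_0\neq x_2$ and choose $x_1\notin S_v\cup S_u$, and for $k\ge 3$ you fill the interior greedily.

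\textbf{The gap is Step 4: you do not settle the case $k=1$.} You isolate the configuration $d_H(v)=4$, $d_H(u)\ge 7$ odd, $|S_u|=2$, and then propose two fallbacks, neither of which is available.
\begin{itemize}
\item The claim assumes only that $c$ is balanced at $u$ and $v$, not that it is equitable, so equitability cannot be invoked.
\item The claim only allows recoloring $e_0,\dots,e_k$, so changing $c(uv)$ is outside the statement. It would also change the color counts at $u$ and $v$, and with them the conditions for other edges there.
\end{itemize}

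\textbf{The missing observation is that your dangerous configuration is impossible by pure counting.}
\begin{itemize}
\item The set $S_u$ is determined by the $d_H(u)-1$ edges at $u$ other than $e_1$, and $uv$ is one of these edges.
\item If $|S_u|=2$, each of its two colors occurs $(d_H(u)-1)/2$ times among those $d_H(u)-1$ edges. Together they cover all of them, so $c(uv)\in S_u$. A split ``evenly between two colors, neither equal to $c(uv)$'' cannot happen.
\item $|\mathrm{Bad}_v|=3$ occurs only when $d_H(v)=4$, and then $c(uv)\in\mathrm{Bad}_v$. Otherwise $|\mathrm{Bad}_v|\le 2$ and $|S_u|\le 2$. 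Either way $|\mathrm{Bad}_v\cup S_u|\le 4$.
\item Symmetrically, $|\mathrm{Bad}_u\cup S_v|\le 4$.
\end{itemize}
As you noted, the two choices are independent, so $x_0$ and $x_1$ can be picked separately. This is exactly the step in the paper's proof (``in case of exactly two forbidden colors, one of them is $c(uv)$ again''). With it, your proof would be complete.
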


\begin{proof}
We consider two cases.

\smallskip
\emph{Case $k=1$}
\smallskip

Note that we have $uv\in E(H)$. If $e_0$ and $e_1$ are already strong majority colored, we are done. If $d(v)\ge 4$ then the balanced condition at $v$ makes the edge $e_0$ strong majority colored, and symmetrically $e_1$ if $d(u)\ge 4$. Hence, an adjustment is needed only for the case $d_H(v)=3$ or $d_H(u)=3$. Suppose $d_H(v)=3$ and $e_0$ is not strong majority colored. We will recolor $e_1$. By Claim~\ref{claim:bad}, at most three colors are forbidden at $u$, and since $uv\in E(H)$, the color $c(uv)$ is one of them if there are exactly three colors forbidden. For the strong majority condition for the edge $e_0$, there are two colors forbidden, and one of them is $c(uv)$ again. Hence, a color remains for $e_1$ that keeps the balance at $u$ and makes $e_0$ strong majority colored. The symmetric failure at $u$ can be repaired in the same way by recoloring $e_0$. Note that by this procedure, no edge other than $e_0$ or $e_1$ will change its color.

Note that we have $uv\in E(H)$. If $e_0$ and $e_1$ are already strong majority colored, we are done. So suppose $e_0$ is not strong majority colored. We will adjust the color of $e_1$, preserving the balance at $u$. By Claim~\ref{claim:bad}, at most three colors are forbidden at $u$, and since $uv\in E(H)$, the color $c(uv)$ is one of them if there are exactly three colors forbidden. For the strong majority condition for the edge $e_0$, there are at most two colors forbidden for $e_1$, and in case of exactly two forbidden colors, one of them is $c(uv)$ again. Hence, a color remains for $e_1$ that keeps the balance at $u$ and makes $e_0$ strong majority colored. The symmetric failure at $e_1$ can be repaired in the same way by recoloring $e_0$. Note that by this procedure, no edge other than $e_0$ or $e_1$ will change its color.

\smallskip
\emph{Case $k\ge 2$.}
\smallskip

When $k=2$, the single interior edge $e_1$ is adjacent only to $e_0$ and $e_2$, which already have distinct colors, so $e_1$ satisfies the strong majority condition, regardless of its color. Now, since the coloring is balanced at $v$, there are at most two colors that, after being used to color $e_1$, could violate the strong majority condition for $e_0$, and the same holds for $e_2$. Therefore, there is a color available for $e_1$, i.e.~a color that makes both $e_0$ and $e_2$ strong majority colored. 

If $k>2$, we color the edges $e_j$, $j\in[k-1]$, one-by-one. Note that any such edge joins two degree-two vertices. Hence, while coloring $e_j$, we are restricted only by the strong majority condition for edges $e_{j-1}$ and $e_{j+1}$, and from each side there are no more than two forbidden colors. A free color for $e_j$ thus remains, and the coloring of the interior extends greedily. 
\end{proof}

\subsection*{Pulling the coloring back to $G$}

We undo the operations in reverse order, that is $G_4\to G_3\to G_2\to G_1\to G$, extending the coloring at each step.

\smallskip
\emph{From $c_4$ to $c_3$ (undoing (R4)).}
\smallskip

Each long ear of $G_3$ was replaced by a cherry $v\,w_1'\,u$ in $G_4$, whose two edges carry distinct colors $c_4(vw_1')\neq c_4(uw_1')$, since the initial coloring was equitable. Restore the ear, set $c_3(e_0)=c_4(vw_1')$ and $c_3(e_k)=c_4(uw_1')$,  this leaves the color multiset at $v$ and at $u$ unchanged, so the balanced condition persists there. Next, color the interior greedily as in the case $k\ge 2$ of Claim~\ref{claim:extend}. The remaining edges preserve their colors. This yields $c_3\colon E(G_3)\to[5]$.

\smallskip
\emph{From $c_3$ to $c_2$ (undoing (R3)).}
\smallskip

Restore each contracted cherry $v\,w_1\,u$ and note that no end vertex has degree five here. If the cherry had been replaced by the edge $uv$, the two restored edges inherit the color of $uv$, that is we set $c_2(e_0)=c_2(e_1)=c_3(uv)$. Then the balance condition at $v$ is preserved and, thanks to that, $e_0$ is strong majority colored. The same holds for $u$ and $e_1$. If the cherry had been retained because $uv\in E(G)$, recolor $e_0, e_1$ by the case $k=1$ of Claim~\ref{claim:extend}. The remaining edges preserve their colors. This yields $c_2\colon E(G_2)\to[5]$.

\smallskip
\emph{From $c_2$ to $c_1$ (undoing (R2)).}
\smallskip

Let $B=v\,e_0\,w_1\,e_1\cdots w_k\,e_k\,u$ be an ear removed by~(R2), with $d_G(v)=5$ and $v\neq u$, and restore its interior. By Remark~\ref{claim:free-color}, the four edges at $v$ in $G_2$ carry distinct colors, so $v$ does not restrict $e_0$: any color leaves $v$ balanced, raising at most one color to multiplicity $2=\lfloor(d_G(v)-1)/2\rfloor$, and $e_0$ is strong majority colored for the same reason. If $d_G(u)\neq 5$, delete the auxiliary edge $uw'$ attached by~(R2) and set $c_1(e_k)=c_2(uw')$. If $d_G(u)=5$, the end $u$ is unrestricted in the same way as $v$. If $k\geq 2$, we can now color all remaining edges of $B$ greedily, starting from $e_1$ to $e_{k-1}$, as in case $k\geq 2$ of Claim~\ref{claim:extend}. The remaining edges preserve their colors. This yields a coloring $c_1\colon E(G_1)\to[5]$.

\smallskip
\emph{From $c_1$ to $c$ (undoing (R1)).}
\smallskip

For a loop ear with $d_G(v)\neq 5$, deleting the two auxiliary leaves attached at $v$ frees their colors for the terminal edges $e_0,e_k$, and the interior extends greedily as before; the only obstruction is when $k=2$ and both $e_0$ and $e_k$ inherit the same color. However, this forces $d_G(v)\ge 6$, since balance assigns distinct colors to all edges at a vertex of degree at most four. Then by Claim~\ref{claim:bad}, at most two colors are bad at $v$, so we may recolor $e_k$ with a color distinct from that used for $e_0$ and from the at most two bad colors at $v$. This keeps the balance at $v$, separates the colors of $e_0$ and $e_k$, and lets $e_1$ be strong majority colored. We can then find a color for the edge $e_1$, using the same strategy as in case $k\geq 2$ of Claim~\ref{claim:extend}. 

For $d_G(v)=5$, the single leaf was removed, so one terminal edge inherits its color, and the other is free: any choice keeps the balance at $v$, and the loop interior then extends as before.

Every edge of $G$ is therefore strongly majority colored, whence $\Majp(G)\le 5$. This completes the proof. \qed

\section*{Acknowledgments}
The research of S.~Antoniuk was supported by the National Science Centre, grant 2024/53/B/ ST1/00164.
The research of M.~Prorok and N.~Salia was supported by the National Science Centre, grant 2021/42/E/ST1/00193.

The authors used GPT-5.5 for language editing and expository
suggestions. The authors are solely responsible for the mathematical content and the
final manuscript.

\bibliographystyle{abbrv}
\bibliography{references.bib}

\end{document}